\documentclass[reqno,11pt]{amsart}
\usepackage{geometry}
\usepackage[numbers,sort&compress]{natbib}
\usepackage{mathtools,amssymb,amsthm,mathrsfs,color,lineno,paralist,graphicx,float}
\usepackage[colorlinks,
linkcolor=red,
anchorcolor=green,
citecolor=blue,
]{hyperref}

\usepackage[T1]{fontenc}
\usepackage[utf8]{inputenc}

\usepackage{tikz}
\usetikzlibrary{positioning}
\usepackage{calc}
\definecolor{bleu1}{RGB}{0,57,128}
\def\bleu1{\color{bleu1}}
\usepackage{etoolbox}
\patchcmd{\section}{\normalfont}{\normalfont \bleu1}{}{}
\patchcmd{\subsection}{\normalfont}{\normalfont \bleu1}{}{}
\patchcmd{\subsubsection}{\normalfont}{\normalfont \bleu1}{}{}
\renewcommand{\proofname}{\it \bleu1 Proof}

\def\e{\varepsilon}

\let\newpf\proof \let\proof\relax 
\newenvironment{pf}{\newpf[\proofname]}{\qed\endtrivlist}

\newcommand{\ba}{\overline{A}}

\def\be{\begin{equation}}
\def\ee{\end{equation}}

\def\ba{{\begin{align}}}
\def\ea{{\end{align}}}

\def\bm{\begin{matrix}}
\def\em{\end{matrix}}

\def\0{{\mathbf 0}}

\newtheorem{Theorem}{Theorem}[section]
\newtheorem{Lemma}{Lemma}[section]
\newtheorem{Proposition}{Proposition}[section]

\numberwithin{equation}{section}

\theoremstyle{definition}

\def\tr{{\text{tr}}}

\newcommand{\Q}{{\mathbb Q}}
\newcommand{\R}{{\mathbb R}}
\newcommand{\T}{{\mathbb T}}

\newcommand{\Z}{{\mathbb Z}}

\def\B0{{\bold{0}}}

\catcode`\@=12

\def\Empty{}
\newcommand\oplabel[1]{
  \def\OpArg{#1} \ifx \OpArg\Empty {} \else
    \label{#1}
  \fi}

\newcommand{\comm}[1]{}
\newcommand{\comment}[1]{}

\begin{document}
\title[]{The Dry Ten Martini Problem for subcritical Type I operators}

\author {Lingrui Ge}
\address{Beijing International Center for Mathematical Research, Peking University, Beijing, China
	} \email{gelingrui@bicmr.pku.edu.cn}

      \begin{abstract}
We supply the details of the subcritical dry Ten Martini statement announced in \cite[Remark 1.3]{gjy}: for every irrational frequency, every subcritical Type I energy with resonant rotation number is an endpoint of an open spectral gap. The proof combines the all-frequency Puig argument developed there with quantitative almost reducibility. A different proof was recently given by Li \cite{L}.
\end{abstract}

\maketitle

\section{Introduction}
The subcritical dry Ten Martini statement was announced in
\cite{gjy}, where it was explained that it follows by
combining the all-frequency Puig argument developed there with
quantitative almost reducibility.   The
purpose of the present note is to supply the details of the argument
announced in \cite{gjy}. A different proof was recently
given by Li \cite{L}, based on a resolvent factorization of the
hyperbolic projection for the finite-range dual operators. 

Consider the following analytic one-frequency Schr\"odinger operator,
\begin{align}\label{sch}
(H_{v,\alpha,x}u)_n=u_{n+1}+u_{n-1}+ v(x+n\alpha)u_n,\ \ n\in\Z,
\end{align} 
where $\alpha\in\R\backslash\Q$, $x\in\T$ and $v\in C^\omega(\T,\R)$. The Lyapunov exponent of complexified  Schr\"odinger cocycles is defined as
\begin{align}\label{multiergodicsch}
L_\e(E)=\lim\limits_{n\rightarrow\infty}\frac{1}{n}\int_\T\ln \|A_E(x+i\e+(n-1)\alpha)\cdots A_E(x+i\e)\|dx
\end{align}
where
\begin{equation}\label{S}
A_E(x)=\begin{pmatrix}E-v(x)&-1\\ 1&0\end{pmatrix}.
\end{equation}
The  {\it acceleration} \cite{avila0} and   {\it T-acceleration} \cite{gjy}  are defined as 
$$
\omega(E)=\lim\limits_{\e\rightarrow
  0^+}\frac{L_\e(E)-L_0(E)}{2\pi\e},\ \ 
\bar{\omega}(E)=\lim\limits_{\e\rightarrow \e_1^+}\frac{L_\e(E)-L_{\e_1}(E)}{2\pi(\e-\e_1)}
$$
where $0\le\e_1<\infty$ is the first turning point of the piecewise affine
function $L_\e(E)$.  
\begin{Theorem}\label{8}
Assume $\alpha\in \R\backslash\Q$, $v\in C_{h_1}^\omega(\T,\R)$, $2\rho(\alpha,A_{E_{k_0}})-k_0\alpha\in\Z$, $L(E_{k_0})=\omega(E_{k_0})=0$ and $\bar{\omega}(E_{k_0})=1$, then $E_{k_0}$ is an endpoint of an open spectral gap carrying this label.
\end{Theorem}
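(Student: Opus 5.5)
We argue by contradiction and follow the Puig-type scheme of \cite{gjy}. Suppose $E_{k_0}$ is not an endpoint of an open gap with label $k_0$, i.e.\ the gap carrying this label is collapsed. The first step is to use the hypotheses $L(E_{k_0})=\omega(E_{k_0})=0$ and $\bar\omega(E_{k_0})=1$. Subcriticality, $L_\e(E_{k_0})=0$ for $0\le\e<\e_1$, together with the quantitative almost reducibility theorem (valid for all irrational $\alpha$), gives a sequence of analytic conjugacies $B_j\in C^\omega_{h'}(\T,PSL(2,\R))$, with $h'<\e_1$ and controlled growth of $\|B_j\|_{h'}$ and of $\deg B_j$, such that
\[
B_j(x+\alpha)^{-1}A_{E_{k_0}}(x)B_j(x)=R_{\phi_j}+F_j(x),\qquad \|F_j\|_{h'}\to 0 .
\]
The resonance condition $2\rho(\alpha,A_{E_{k_0}})-k_0\alpha\in\Z$ forces the rotation numbers $\phi_j$ to be absorbed along the resonance. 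After a final rotation conjugacy of degree related to $k_0$, the cocycle is then close to a constant parabolic or elliptic matrix of small angle.

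The second step is to pass to the dual side, as in the all-frequency Puig argument of \cite{gjy}. The condition $\bar\omega(E_{k_0})=1$ means that the first turning of $L_\e$ has slope one. Via the Aubry-type duality for finite-range dual operators, this yields a \emph{one-dimensional} space of $\ell^2$-decaying (exponentially localized, at rate governed by $\e_1$) dual eigenfunctions at the phase $\theta=\rho$ associated with the label. Equivalently, on the Schr\"odinger side, the cocycle $A_{E_{k_0}}$ admits an invariant section $\psi\in C^\omega_{\e_1^-}(\T,\mathbb C^2)$ satisfying
\[
A_{E_{k_0}}(x)\psi(x)=e^{2\pi i\rho}\psi(x+\alpha).
\]
Since $2\rho\equiv k_0\alpha \pmod 1$, the pair $\psi,\bar\psi$ spans a degenerate direction: the determinant $\det(\psi,\bar\psi)$ is a function invariant under $x\mapsto x+\alpha$, hence constant. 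If this constant is nonzero, $A_{E_{k_0}}$ is analytically conjugate to a constant rotation of angle $\rho$. We then rule this out by the collapsed-gap assumption combined with $\bar\omega=1$: the Puig dimension count shows that a reducible resonant rotation would produce two independent dual solutions, contradicting $\bar\omega=1$. Hence the constant vanishes, $\psi$ may be taken real up to a phase, and completing $\psi$ to a basis gives
\[
B(x+\alpha)^{-1}A_{E_{k_0}}(x)B(x)=\begin{pmatrix}\pm1& c(x)\\0&\pm1\end{pmatrix},
\]
with $B$ analytic of degree tied to $k_0$.

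The third step averages $c$, via a cohomological equation solvable in the analytic category after shrinking the strip, which reduces the cocycle to a constant parabolic matrix with off-diagonal entry $[c]$. We must show $[c]\neq0$. Here the quantitative almost reducibility enters. If $[c]=0$, the cocycle would be reducible to $\pm I$, producing two independent analytic Bloch waves. This contradicts the one-dimensionality of the dual eigenspace, which is exactly the consequence of $\bar\omega=1$ that the sequence $B_j$ lets us transport to the limit without losing analyticity width. Once $[c]\neq0$, the standard perturbative argument applies: vary $E$ and use the monotonicity of $\partial_E A_E$ together with the conjugacy $B$. Then $A_E$ becomes uniformly hyperbolic for $E$ on one side of $E_{k_0}$, which gives an open gap, labelled $k_0$ through $\deg B$ and the gap-labelling, with $E_{k_0}$ as an endpoint. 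This contradicts the collapse.

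The main obstacle is the second step for \emph{all} irrational $\alpha$, including Liouvillean ones. The dual eigenfunction's analyticity must survive small divisors, and we need uniform control of the width $h'<\e_1$ in the almost-reducibility conjugacies so as to exclude $[c]=0$. I would first try to get it by importing the all-frequency Puig argument of \cite{gjy} verbatim for the existence and uniqueness of $\psi$. The quantitative bounds $\|B_j\|_{h'}\le e^{o(q_{n})}$ along continued fraction denominators would then supply the limiting non-degeneracy.
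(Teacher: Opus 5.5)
For $\beta(\alpha)=0$ your outline is essentially the paper's. There, exact reducibility to $\pm(\mathrm{id}+d\mathfrak{L})$ (Theorem~\ref{thm3.3}) supplies the Bloch wave, and the Puig dimension count from $\bar\omega(E_{k_0})=1$ excludes $d=0$, since $d=0$ would give two independent analytic Bloch waves. The Moser--P\"oschel perturbation with $d\tau<0$ then finishes.

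The theorem, however, covers all irrational $\alpha$, and for $\beta(\alpha)>0$ your Steps 2--3 fail for two reasons.

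\emph{No exact Bloch wave.} The condition $\bar\omega=1$ only gives an \emph{upper} bound: at most one suitably decaying dual solution. It does not produce an exact dual eigenfunction, nor an analytic $\psi$ with $A_{E_{k_0}}(x)\psi(x)=e^{2\pi i\rho}\psi(x+\alpha)$. Such a $\psi$ is an exact (triangular) reducibility statement. When $\beta(\alpha)>0$ the only available input is the almost reducibility of Theorem~\ref{thm3.2}, namely $\mathrm{id}+d_n\mathfrak{L}+F_n$ with $|B_n|_{h_n}\le e^{2q_{n+1}\varepsilon_n^{1/4}}$. This growth is in $q_{n+1}$, which along the relevant subsequence is at least $e^{(\beta-o(1))q_n}$; it is not $e^{o(q_n)}$. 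So nothing can be transported to a limit.

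\emph{The averaging step fails.} Even granting $\psi$, averaging $c$ requires solving $g(x+\alpha)-g(x)=c(x)-[c]$. Since $\|q_n\alpha\|\approx q_{n+1}^{-1}$, this in general has no analytic solution once $\beta(\alpha)$ exceeds $2\pi$ times the width of $c$. No amount of strip-shrinking helps.

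The missing idea is to stay at a finite scale and make the nondegeneracy quantitative, comparing $d_n$ with $F_n$ at the same scale. The paper proves $|d_n|\ge e^{-\frac{1}{100}q_{n+1}\delta_n}$ for all large $n$ (Theorem~\ref{6}(2)).

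\emph{Why the lower bound holds.} Suppose it failed at scale $k$. Then both first-row entries $b^k_{11},b^k_{12}$ of $B_k$ would solve $-b(x+\alpha)-b(x-\alpha)+(E-v(x))b(x)=g(x)$ with $|g|_h\le Ce^{-10n_kh_1}$, where $n_k\approx\delta_kq_{k+1}/(4000h_1)$. Their Fourier coefficients are then two approximate solutions of the truncated dual operator, with error $Ce^{-2\pi n_k(h_1-\delta)}$. Each has a coefficient of size at least $ce^{-\eta r_k}$ in $I_k$. Since $r_k=o(n_k)$, the $\bar\omega=1$ dimension argument of \cite{gjy} applies to these approximate solutions and gives a contradiction.

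\emph{How it replaces $[c]\neq0$.} Take $d_n\tau<0$ and $e^{-\frac{\delta_n}{5}q_{n+1}}<|\tau|\le e^{-\frac{\delta_n}{10}q_{n+1}}$. In this window the hyperbolicity created by $\tau d_n$ dominates both $|F_n|_{h_n}\le e^{-q_{n+1}\delta_n}$ and the quadratic error $\tau^2e^{8q_{n+1}\varepsilon_n^{1/4}}$. A cone argument then gives uniform hyperbolicity of $(\alpha,A_{E_{k_0}+\tau})$ with $2\rho=k_0\alpha\bmod\Z$ (Proposition~\ref{9}), and the IDS argument finishes. Without this same-scale comparison of $d_n$ against $F_n$, your argument does not reach the case $\beta(\alpha)>0$.
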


\section{Key ingredients}

\subsection{Quantitative almost reducibility}
Let $0<h_*<h_s$ where $h_s$ is the subcritical strip of $(\alpha,A_E)$, $0<\alpha<1$ and $\delta'>0$ be the  constants from Theorem 3.2 in \cite{dryAYZ}. Let
\begin{equation}
\varepsilon_{n} = 2e^{\frac{2\pi h_{*}}{1+\alpha}}e^{-q_{n}\delta'/2},\qquad
\delta_{n} = h_{*}/2^{n},\qquad
h_{n} = h_{*}/(1+\alpha)-\sum_{k=1}^n\delta_{k}/6,
\end{equation}
and also $\mathfrak{L}=\begin{pmatrix}0&1\\0&0\end{pmatrix}.$

\begin{Theorem}[\cite{arc1,dryAYZ}]
\label{thm3.2}
Let $\alpha\in\mathbb{R}\backslash\mathbb{Q}$ with $\beta(\alpha)>0$, $k_{0}\in\mathbb{Z}$ and suppose that $(\alpha,A)$ is subcritical and $\ell=2\rho_{f}(\alpha,A)-k_{0}\alpha\in\mathbb{Z}$. Then there exist a subsequence of $q_{n}$ with $q_{n+1}>e^{(\beta-o(1))q_{n}}$ \footnote{If $\beta(\alpha)=\infty$, consider the sequence such that $q_{n+1}>e^{100q_n}$.}, $B_{n}(x)\in C_{h_{n}}^{\omega}(\mathbb{T},\mathrm{PSL}(2,\mathbb{R}))$ with $\deg B_{n}=k_{0}$ and $d_{n}\in\mathbb{R}$ such that
\begin{equation}\label{1}
(-1)^{\ell}B_{n}(x+\alpha)^{-1}A(x)B_{n}(x)=\mathrm{id}+d_{n}\mathfrak{L}+F_{n}(x)
\end{equation}
\begin{equation}\label{5}
|B_{n}|_{h_{n}}\leq e^{2q_{n+1}\varepsilon_{n}^{\frac{1}{4}}},\ \ |d_{n}|\leq e^{-q_{n+1}\varepsilon_{n}^{\frac{1}{4}}},\ \ |F_{n}|_{h_{n}}\leq e^{-q_{n+1}\delta_{n}}.
\end{equation}
\end{Theorem}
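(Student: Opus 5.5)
The statement is quoted from \cite{arc1,dryAYZ}, so the plan is to reconstruct it from the quantitative almost reducibility scheme there and then normalize the constant part using the resonance condition $2\rho_f(\alpha,A)-k_0\alpha=\ell\in\Z$. The proof would run in three stages.

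\textbf{Stage 1: quantitative almost reducibility.} Since $(\alpha,A)$ is subcritical in the strip $h_s>h_*$, Avila's almost reducibility theorem, combined with the quantitative version in \cite[Theorem 3.2]{dryAYZ}, should give the following along the denominators $q_n$. There are conjugacies $\tilde B_n\in C^\omega_{h_n}(\T,\mathrm{PSL}(2,\R))$ and constants $\tilde A_n\in \mathrm{SL}(2,\R)$ such that
$$\tilde B_n(x+\alpha)^{-1}A(x)\tilde B_n(x)=\tilde A_n+\tilde F_n(x),\qquad |\tilde F_n|_{h_n}\le e^{-q_{n+1}\delta_n},\qquad |\tilde B_n|_{h_n}\le e^{C q_{n+1}\varepsilon_n^{1/2}}.$$
The strip loss from $h_*$ to $h_*/(1+\alpha)$ and the losses $\delta_k/6$ are the usual costs of the resonant steps. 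I would pass to the subsequence with $q_{n+1}>e^{(\beta-o(1))q_n}$ for two reasons. First, $\varepsilon_n=2e^{2\pi h_*/(1+\alpha)}e^{-q_n\delta'/2}$ is then tiny compared with $1/q_{n+1}$ in the relevant powers. Second, the error can be made $e^{-q_{n+1}\delta_n}$: this is where $\beta>0$ is used, because one KAM step is performed in the non-resonant window $[q_n,q_{n+1}]$. If $\beta=\infty$, we use the sequence with $q_{n+1}>e^{100q_n}$ as in the footnote.

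\textbf{Stage 2: degree and rotation number.} The fibered rotation number changes under conjugation by the degree of the conjugacy:
$$\rho_f\bigl(\alpha,\tilde B_n(\cdot+\alpha)^{-1}A\tilde B_n\bigr)=\rho_f(\alpha,A)-\tfrac{\deg\tilde B_n}{2}\alpha \pmod{\tfrac12\Z}.$$
I would compose $\tilde B_n$ with $R_{(k_0-\deg\tilde B_n)x/2}$ (absorbing the resulting rotation into the constant, which costs only a factor $e^{C|k|h_n}$ with $|k|\lesssim q_n$) so that $\deg B_n=k_0$. Then the rotation number of the conjugated cocycle equals $\rho_f-k_0\alpha/2=\ell/2$ modulo $\frac12\Z$. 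Multiplying by $(-1)^\ell$ reduces to rotation number $0$ mod $\Z$.

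Because the rotation number is $1/2$-Hölder, a constant $(-1)^\ell\tilde A_n$ perturbed by $\tilde F_n$ having rotation number $0$ forces $\tilde A_n$ to be within $|\tilde F_n|^{1/2}$ of a parabolic or identity matrix. One more constant $\mathrm{SL}(2,\R)$ conjugation $P_n$ then brings it to $\mathrm{id}+d_n\mathfrak L$, with the rest put into $F_n$.

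\textbf{Stage 3 and the main obstacle: the norms in \eqref{5}.} The step I expect to be delicate is achieving simultaneously $|B_n|\le e^{2q_{n+1}\varepsilon_n^{1/4}}$, $|d_n|\le e^{-q_{n+1}\varepsilon_n^{1/4}}$ and $|F_n|\le e^{-q_{n+1}\delta_n}$. Conjugating by the Jordan normalizing matrix $P_n$ can enlarge $F_n$ by $\|P_n\|^2$. My first attempt is to write $(-1)^\ell\tilde A_n=\exp\bigl(\begin{smallmatrix}a&b\\c&-a\end{smallmatrix}\bigr)$ with $a^2+bc$ of size at most $|\tilde F_n|^{1/2}$. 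I would then use a rotation followed by a diagonal matrix $\mathrm{diag}(\lambda,\lambda^{-1})$ with $\lambda\sim e^{q_{n+1}\varepsilon_n^{1/4}/2}$. This multiplies the upper-right entry by $\lambda^{-2}$, giving $|d_n|\le e^{-q_{n+1}\varepsilon_n^{1/4}}$, and costs a factor $\lambda^2$ on $B_n$, within the budget $e^{2q_{n+1}\varepsilon_n^{1/4}}$ (after slightly adjusting exponents from Stage 1). The error grows only by $\lambda^4=e^{2q_{n+1}\varepsilon_n^{1/4}}$. This is harmless against $e^{-q_{n+1}\delta_n}$ as long as $\varepsilon_n^{1/4}\ll\delta_n=h_*2^{-n}$, which holds since $\varepsilon_n$ decays like $e^{-q_n\delta'/2}$ along the chosen subsequence. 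The residual elliptic or hyperbolic part, of size $|\tilde F_n|^{1/2}$, is absorbed into $F_n$ after halving $\delta_n$, which the slack in $h_n$ permits.
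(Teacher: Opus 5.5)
The paper gives no proof of this statement; it imports it from \cite{arc1,dryAYZ}. Your reconstruction therefore has to stand on its own, and it does not. Essentially all of its analytic content is in Stage 1, which you assert (``should give'') rather than prove. Passing from an error $\varepsilon_n\sim e^{-q_n\delta'/2}$ to $e^{-q_{n+1}\delta_n}$ in one step, with a conjugacy of size only $e^{O(q_{n+1}\varepsilon_n^{1/4})}$, is exactly the Liouvillean KAM step of the cited works. The small divisors $\|k\alpha\|\gtrsim q_{n+1}^{-1}$ for $0<|k|<q_{n+1}$ are what make the conjugacy that large.

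Worse, the version of Stage 1 you need is stronger than the statement itself, so it cannot simply be read off from \cite{dryAYZ}. You need a conjugacy bound $e^{Cq_{n+1}\varepsilon_n^{1/2}}$ instead of $e^{2q_{n+1}\varepsilon_n^{1/4}}$. Because of the square-root loss in Stage 2, you also need an error of order $e^{-2q_{n+1}\delta_n}$. Since $\delta_n$ and $h_n$ are fixed in the statement, ``halving $\delta_n$'' is not available.

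Moreover, that KAM step can only remove the Fourier modes $0<|k|<q_{n+1}$ when the rotation number of the constant part is not close to $k\alpha/2$ modulo $\tfrac12\Z$ for any such $k$. This non-resonance, together with $2\rho_f-k_0\alpha=\ell$ and an a priori bound on the degree, is what forces $\deg=k_0$. So the degree is an output of the resonance analysis inside Stage 1. The real obstacle is Stage 1, not Stage 3; the $\mathrm{diag}(\lambda,\lambda^{-1})$ rescaling there is a sound device once the earlier steps hold.

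The normalizations you do carry out also contain errors.

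\textbf{Degree correction.} Conjugating $\tilde A_n+\tilde F_n$ by $R_{kx/2}$ gives $R_{-k\alpha/2}R_{-kx/2}\tilde A_nR_{kx/2}$ plus a small term. This is constant only if $\tilde A_n\in SO(2,\R)$. For $\tilde A_n$ near $\mathrm{id}+d\mathfrak L$, its $x$-dependent part has size comparable to $|d|$, which cannot be moved into $F_n$; at type I energies Theorem~\ref{6} even gives $|d_n|\ge e^{-q_{n+1}\delta_n/100}$. If $k=k_0-\deg\tilde B_n\neq0$, you must first conjugate $\tilde A_n$ to a rotation by a constant matrix. You must also bound $|k|$, which you can do via the Jensen-type estimate $|\deg B|\lesssim\ln|B|_h/h$. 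This gives $0<|k|<q_{n+1}$, hence $\|k\alpha\|\ge(2q_{n+1})^{-1}$, so $\tilde A_n$ really is elliptic with a controlled conjugating matrix. Your claim $|k|\lesssim q_n$ is not justified.

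\textbf{Hyperbolic constant part.} $\tfrac12$-H\"older continuity of the rotation number says nothing when $\tilde A_n$ is hyperbolic, because its rotation number is then exactly $0$. To obtain $|\operatorname{tr}\tilde A_n|-2\lesssim\|\tilde A_n\|\,|\tilde F_n|$, you need that $(\alpha,\tilde A_n+\tilde F_n)$ is not uniformly hyperbolic, i.e.\ that $L(\alpha,A)=0$.

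\textbf{Bound on $\|\tilde A_n\|$.} That trace estimate, the H\"older constant, and your requirement $|d|\le 1$ before rescaling all need a bound on $\|\tilde A_n\|$, which Stage 1 does not provide.
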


\begin{Theorem}[\cite{aj1,arc2}]
\label{thm3.3}
Let $\alpha\in\mathbb{R}\backslash\mathbb{Q}$ with $\beta(\alpha)=0$, $k_{0}\in\mathbb{Z}$ and suppose that $(\alpha,A)$ is subcritical and $\ell=2\rho_{f}(\alpha,A)-k_{0}\alpha\in\mathbb{Z}$. Then $(\alpha,A)$ is reducible, i.e. there exists $B(x)\in C^{\omega}(\mathbb{T},\mathrm{PSL}(2,\mathbb{R}))$, $d\in\mathbb{R}$ such that
\begin{equation}
(-1)^{\ell}B(x+\alpha)^{-1}A(x)B(x)=\mathrm{id}+d\mathfrak{L}.
\end{equation}
\end{Theorem}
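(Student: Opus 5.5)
The plan is to derive Theorem \ref{thm3.3} from two known facts. The first is Avila's almost reducibility theorem (ARC): every subcritical analytic cocycle is almost reducible in some strip. The second is a local KAM scheme in that strip, valid for frequencies with $\beta(\alpha)=0$. Because the rotation number is resonant, $2\rho_f(\alpha,A)\equiv k_0\alpha \pmod{\Z}$, the KAM iteration should meet only finitely many genuine resonances. It then converges to an honest constant cocycle, which must be parabolic or $\pm\mathrm{id}$.

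\textbf{Step 1 (reduction to the perturbative regime).} By subcriticality and ARC, for any $\epsilon>0$ there is $\Phi\in C^\omega_{h'}(\T,\mathrm{PSL}(2,\R))$, for some fixed $0<h'<h_s$, with
\[
\Phi(x+\alpha)^{-1}A(x)\Phi(x)=A_0e^{f(x)},\qquad |f|_{h'}<\epsilon,\quad A_0\in \mathrm{SL}(2,\R)\ \text{constant}.
\]
The constant $\epsilon$ is chosen smaller than the threshold of the local theory for the strip $h'/2$. Since $\beta(\alpha)=0$, this threshold depends only on $h'$ and not on the arithmetic scale.

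\textbf{Step 2 (KAM with resonance removal).} I will run the standard Eliasson/Hou--You KAM step. At step $j$ the cocycle is $A_je^{f_j}$ with $|f_j|_{h_j}\le\epsilon_j$ and $\epsilon_{j+1}\approx\epsilon_j^{1+\sigma}$. The cohomological equations are solved for all modes $0<|k|\le N_j$ except a possible resonant mode $k$ with $\|2\rho(A_j)-k\alpha\|_{\R/\Z}<\epsilon_j^{\sigma}$.

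If such a resonance occurs, it is removed by the rotation conjugation $R_{kx/2}$. This conjugation is allowed in $\mathrm{PSL}(2,\R)$ because $k$ may be odd. It costs a factor $e^{C|k|h_j}$ and shifts the rotation number by $k\alpha/2$. The hypothesis $\beta(\alpha)=0$ is used in two places:
\begin{itemize}
\item it gives $\|k\alpha\|\ge e^{-o(|k|)}$, so the small divisors on non-resonant modes cost only subexponential losses, which are absorbed by shrinking the strip;
\item it ensures that successive resonances $k$, $k'$ force $|k'|$ to be large compared with $\ln \epsilon_j^{-1}$, so the conjugation losses stay controlled.
\end{itemize}

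The key bookkeeping point is that the fibered rotation number is preserved modulo the shifts $k\alpha/2$. Once the accumulated resonance sum equals $k_0$, the constant part has rotation number $\rho(A_j)\in\frac12\Z$, i.e. $\rho(A_j)\equiv0 \pmod{\frac12}$. After that no further resonance can appear, because $\|k\alpha\|>\epsilon_j^{\sigma}$ for all $0<|k|\le N_j$ once $\epsilon_j$ is small. Hence only finitely many resonant steps occur. After the last one the scheme is purely non-resonant and converges in a strip $h''>0$.

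\textbf{Step 3 (the limit constant).} The limit is an analytic $B$ of degree $k_0$ (up to the finitely many rotations) with $B(x+\alpha)^{-1}A(x)B(x)=C$, where $C$ is a constant with rotation number $\ell/2$. So $C$ has trace $\pm2$, with sign $(-1)^\ell$. A constant conjugation in $\mathrm{SL}(2,\R)$, composed if necessary with the $\mathrm{PSL}$ identification, puts $C$ in the form $(-1)^\ell(\mathrm{id}+d\mathfrak{L})$, which is the conclusion.

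\textbf{Main obstacle.} I expect the difficulty to lie in Step 2, specifically in proving that resonances stop and that the product of conjugations converges. First I would check that every removed resonance $k$ satisfies $\|2\rho(A_j)-k\alpha\|<\epsilon_j^{\sigma}$. Then, using $\beta(\alpha)=0$, I would show that the total degree $\sum k$ equals $k_0$ at some finite step; the resonance condition together with $2\rho\equiv k_0\alpha$ pins down each $k$. If this direct argument is too delicate, the fallback is Avila--Jitomirskaya Aubry duality: the dual operator has an exponentially localized eigenfunction at the resonant phase. This yields reducibility to a constant parabolic matrix via the standard quantitative duality construction, with $\beta=0$ needed to control the denominators.
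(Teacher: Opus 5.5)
The paper does not prove Theorem~\ref{thm3.3}; it imports it from \cite{aj1,arc2}, so your outline has to stand on its own. The architecture is natural: almost reducibility from the ARC, then local reducibility in the resonant case. For Diophantine $\alpha$, your Step~2 is Eliasson's classical argument and works. For $\beta(\alpha)=0$, however, the step you flagged fails as written.

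In an Eliasson-type step the truncation is $N_j\approx 2\delta_j^{-1}\ln\varepsilon_j^{-1}$ with $\delta_j=h_j-h_{j+1}$. With the usual geometric strip losses and $\varepsilon_{j+1}\approx\varepsilon_j^{1+\sigma}$, you get $\ln N_j\asymp j$ while $j\delta_j\to0$. The hypothesis $\beta(\alpha)=0$ only gives $\|k\alpha\|\ge e^{-\eta(|k|)|k|}$ with $\eta\to0$ at an $\alpha$-dependent rate. So for $|k|\le N_j$ you only get roughly $\|k\alpha\|\gtrsim\varepsilon_j^{2\eta(N_j)/\delta_j}$, and nothing makes $2\eta(N_j)/\delta_j<\sigma$.

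Concretely, take $\ln q_{n+1}\approx q_n/\ln q_n$, which still has $\beta(\alpha)=0$. Then for infinitely many $j$ some $q_n\le N_j$ has $\|q_n\alpha\|<\varepsilon_j^{\sigma}$, and this happens after $2\rho_f$ has already been brought into $\Z$. So ``no further resonance can appear'' is false, and so is ``the resonance condition pins down each $k$.'' Your scheme then has two options, and both fail:
\begin{itemize}
\item rotate by $R_{q_nx/2}$, which moves $\rho_f$ off $\frac12\Z$ and shrinks the strip again; this happens infinitely often, so you get only almost reducibility;
\item solve with the divisor $\|q_n\alpha\|\approx q_{n+1}^{-1}$, whose cost $q_{n+1}$ here exceeds $\varepsilon_j^{-1}$.
\end{itemize}
Your phrase ``subexponential losses absorbed by shrinking the strip'' hides the same problem. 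The quantity $\sup_k e^{\eta(|k|)|k|-2\pi\delta|k|}$ is finite for each $\delta>0$, but it can blow up arbitrarily fast as $\delta\to0$, while the strip losses must be summable.

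What is missing is a KAM organization that uses $\ln q_{n+1}=o(q_n)$ directly, not a Diophantine-type lower bound on $\|k\alpha\|$. Truncations and perturbation sizes must be tied to the continued-fraction denominators. Then, whenever a divisor $\|q_n\alpha\|\approx q_{n+1}^{-1}$ is met, the perturbation is already of size $e^{-cq_n}$ with $c$ bounded below, and the loss $q_{n+1}=e^{o(q_n)}$ is affordable. The Liouvillean schemes of Avila--Fayad--Krikorian and Hou--You are designed for exactly this. Your duality fallback is only named, not argued, so it does not close the gap either.

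Two smaller points:
\begin{itemize}
\item After the last resonance it is the fibered rotation number of $A_je^{f_j}$ that lies in $\frac12\Z$, not $\rho(A_j)$; the latter is only $O(\varepsilon_j^{1/2})$-close to $\frac12\Z$.
\item In Step~3, a rotation number in $\frac12\Z$ does not force $\tr C=\pm2$, because hyperbolic constants also have rotation number in $\frac12\Z$. You need $L(\alpha,A)=0$, which subcriticality supplies, to exclude them.
\end{itemize}
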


\subsection{All-frequency Puig's argument}
The following theorem is essentially proved in \cite{gjy}, we give a sketch here. 
\begin{Theorem}\label{6}
The following statements hold:

{\rm (1)}If we assume $\beta(\alpha)=0$, $E_{k_0}$ is a subcritical type I energy and $2\rho(\alpha,A_{E_{k_0}})-k_0\alpha\in\Z$, applying Theorem \ref{thm3.3} to $A_{E_{k_0}}$, we further have $d\neq 0$.

{\rm (2)}If we assume $\beta(\alpha)>0$, $E_{k_0}$ is a subcritical type I energy and $2\rho(\alpha,A_{E_{k_0}})-k_0\alpha\in\Z$, applying Theorem \ref{thm3.2} to $A_{E_{k_0}}$, we further have $|d_n|\geq   e^{-\frac{1}{100}q_{n+1}\delta_{n}}$ for $n$ sufficiently large.
\end{Theorem}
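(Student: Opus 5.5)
We argue by contradiction via Aubry duality, following the all-frequency Puig argument of \cite{gjy}. Suppose $d=0$ in case (1), or $|d_n|< e^{-\frac{1}{100}q_{n+1}\delta_n}$ along a subsequence in case (2).

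\emph{Case (1).} If $d=0$, then $(-1)^\ell B(x+\alpha)^{-1}A_{E_{k_0}}(x)B(x)=\mathrm{id}$, so the cocycle is reducible to the identity. Each column of $B$ then gives an analytic quasi-periodic Bloch wave, and the two columns are linearly independent. We pass to the dual operator $\widehat H_{E_{k_0}}$. Since $v$ is analytic, this is a finite-range (or exponentially decaying) long-range operator on $\ell^2(\Z)$, acting on Fourier coefficients. The Fourier coefficients of the two columns, shifted by the phase $e^{2\pi i k_0 x/2}$ dictated by $\deg B=k_0$, give two exponentially decaying eigenfunctions of $\widehat H$ at the same energy $E_{k_0}$ and the same phase $\theta$ with $2\theta=k_0\alpha$ mod $\Z$. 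Exponential decay holds because $B$ is analytic on a strip of width $h<h_s$.

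We then invoke the type I hypothesis $\bar\omega(E_{k_0})=1$, as \cite{gjy} does. By the T-acceleration/dual Lyapunov theory, the dual cocycle at $E_{k_0}$, which is a $2d\times 2d$ symplectic cocycle, has exactly one pair of Lyapunov exponents equal to zero on the relevant strip, with all others bounded away from zero. Equivalently, the space of exponentially decaying (or polynomially bounded) solutions of the dual eigenvalue equation at a fixed phase is one-dimensional, since the center bundle has dimension $2$ and only one direction can be $\ell^2$. Two independent decaying eigenfunctions therefore contradict this. Concretely, we use the dominated splitting of the dual cocycle: any $\ell^2$ solution must lie in the intersection of the stable and unstable bundles with the two-dimensional center, and this intersection is at most one-dimensional. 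This yields $d\ne0$.

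\emph{Case (2).} We run the same argument quantitatively. From \eqref{1}, the conjugated cocycle is $\mathrm{id}+d_n\mathfrak L+F_n$, with $|F_n|_{h_n}\le e^{-q_{n+1}\delta_n}$ and $|d_n|<e^{-\frac1{100}q_{n+1}\delta_n}$. Both columns of $B_n$ are therefore approximate Bloch waves. Iterating $q_{n+1}$ times (or up to scale $e^{\frac1{200}q_{n+1}\delta_n}$), the cocycle stays within $e^{-cq_{n+1}\delta_n}$ of $\mathrm{id}$, while $|B_n|\le e^{2q_{n+1}\varepsilon_n^{1/4}}$ is negligible because $\varepsilon_n^{1/4}\ll\delta_n$. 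Taking Fourier coefficients produces two approximate eigenvectors of the dual operator. Each is supported (up to an error of size $e^{-cq_{n+1}\delta_n}$) on a window of length of order $q_{n+1}$, with decay rate of order $h_n$, and they are almost orthogonal. We then use the quantitative version of the one-dimensionality from \cite{gjy}. The uniform hyperbolicity of the non-central exponents gives a uniform gap: on a block of size $N\sim q_{n+1}$, the dual operator has at most one eigenvalue within $e^{-cN}$ of $E_{k_0}$ on a localized subspace. Two almost-orthogonal quasi-modes with error $e^{-cq_{n+1}\delta_n}$ contradict this once $n$ is large.

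The main obstacle is this quantitative step. The exponent in the spectral-multiplicity bound coming from the dominated splitting must beat the error $e^{-\frac1{100}q_{n+1}\delta_n}$, and it must be uniform in $n$ even though the strips $h_n$ shrink. We plan to handle this by fixing a strip width below $h_*/2$, where all the $h_n$ live, and using the constants from \cite{gjy} there.
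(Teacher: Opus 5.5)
Your skeleton is the paper's: argue by contradiction, read the first-row entries $b^k_{1j}$ of $B_k$ as approximate Bloch waves, Fourier transform them into approximate eigenvectors of the dual, and contradict the simplicity that \cite{gjy} extracts from $\bar\omega=1$. For (1) the paper, like you, simply invokes \cite[Theorem 11.2]{gjy}.

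The gap is in the quantitative step of (2), which is the real content of the statement. You compare a bound ``at most one eigenvalue within $e^{-cN}$'' on blocks of size $N\sim q_{n+1}$ with quasi-modes of error $e^{-cq_{n+1}\delta_n}$. Since $\delta_n=h_*2^{-n}\to0$, that error is exponentially larger than $e^{-cN}$. Two quasi-modes with error $\epsilon$ only force two spectral points within $O(\epsilon)$ of $E_{k_0}$, so nothing is contradicted. Your proposed fix, a fixed strip width, does not touch this: the $h_n$ already converge to a positive limit $h$. The real obstruction is that the only smallness available is $e^{-c\delta_nq_{n+1}}$. So the argument must be run at scale $\asymp\delta_nq_{n+1}$, and you must show the quasi-modes live, non-degenerately, on a much shorter scale.

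The paper's proof supplies exactly this, in three steps.
\begin{itemize}
\item[(a)] It truncates $v$ to $v_{n_k}$ with $n_k=[\delta_kq_{k+1}/(4000h_1)]$. The dual then becomes the finite-range operator $L^{2\cos}_{v_{n_k},\alpha,0}$, which is where the dominated-splitting machinery of \cite{gjy} applies. For non-polynomial $v$ your $2d\times2d$ dual cocycle does not exist, even in case (1). The total error $|\hat f^k_j(m)|\le Ce^{-2\pi n_k(h_1-\delta)}$ absorbs both $d_k\mathfrak L+F_k$ and the tail $\sum_{|l|>n_k}\hat v_l\hat b^k_j(m+l)$.
\item[(b)] It localizes the quasi-modes on the window $I_k$ of length $\asymp r_k\asymp\eta^{-1}\e_k^{1/4}q_{k+1}$. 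There it uses $|B_k|_h\le Ce^{\eta r_k}$ and the Fourier lower bound of Lemma~\ref{10}: $|\hat b^k_j(m_j)|\ge ce^{-\eta r_k}$ for some $m_j\in I_k$. Your ``almost orthogonal'' claim is unjustified. Nothing makes the Fourier vectors of $b^k_{11}$ and $b^k_{12}$ nearly orthogonal, and the non-degeneracy one actually has comes with losses $e^{O(\eta r_k)}$.
\item[(c)] It runs \cite[Prop.~11.1, Thm.~11.2]{gjy} with those losses and cuts off at $s_k=\min\{l(n_k),Mr_k\}$. The contradiction comes from $r_k=o(n_k)$, i.e.\ $\e_k^{1/4}=o(\delta_k)$. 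You noticed that $\e_n^{1/4}\ll\delta_n$, but it must enter precisely here, as the comparison of the localization scale $r_k$ with the working scale $n_k$.
\end{itemize}

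Finally, a conceptual correction. At a subcritical energy the dual cocycle has no zero exponents: its exponents are $\pm2\pi$ times the turning points of $\e\mapsto L_\e(E_{k_0})$, all nonzero. Type I means the innermost pair is simple and dominated by the rest. That is what confines two-sided decaying solutions to a two-dimensional center, where the invariant symplectic form gives uniqueness.
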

\begin{pf}
$\beta(\alpha)=0$ case is proved in Theorem 11.2 in \cite{gjy}. For $\beta(\alpha)>0$, suppose that the claimed bound fails along infinitely many admissible indices. We will consider the case \((-1)^{\ell}=1\) and $B_k\in C^\omega(\T,SL(2,\R))$, the other case being analogous  \footnote{One may need to adjust the argument to $\theta=0,\frac{1}{2}, \frac{1}{2}+\frac{\alpha}{2}, \frac{\alpha}{2}$ for different situations.}. Assume that $|d_k|\leq  e^{-\frac{1}{100}q_{k+1}\delta_{k}}$. Set $h=\lim\limits_{k\rightarrow \infty}h_k$, $h_1>\e_1(E)$ the analytic radius of $v$, $0<\eta\ll 1$,  
$$
n_k=[\frac{\delta_k}{4000 h_1}q_{k+1}],\ \ r_k=[\frac{1000}{\eta}\e_k^{\frac{1}{4}}q_{k+1}], \ \ 
I_{k}=\left[-\frac{[r_k]}{h},\frac{[r_k]}{h}\right].
$$

We denote
\begin{equation}\label{2}
B_k(x)=\begin{pmatrix}
b^k_{11}(x)&b^k_{12}(x)\\
b^k_{21}(x)&b^k_{22}(x)
\end{pmatrix},\ \ \begin{pmatrix}e^k_{11}(x)&e^k_{12}(x)\\
e^k_{21}(x)&e^k_{22}(x)
\end{pmatrix}=B_k(x+\alpha)\left(d_{k}\mathfrak{L}+F_{k}(x)\right).
\end{equation}
By  \eqref{1}, for $j=1,2$ we have
\begin{equation}\label{subge}
-b^k_{1j}(x+\alpha)-b^k_{1j}(x-\alpha)+(E-v(x))b^k_{1j}(x)=e^k_{1j}(x)-e^k_{2j}(x-\alpha):=g_j^k(x).
\end{equation}
By \eqref{5} and \eqref{2}, we have
\begin{equation}\label{tan7}
|B_{k}|_{h}\leq Ce^{\eta r_k},\ \ |g^k_{j}|_h\leq C e^{-10n_kh_1}.
\end{equation}

Let  $b^k_{1j}(x)=\sum_m\hat{b}^k_j(m)e^{-2\pi i mx}$ be the Fourier
expansion. By \eqref{subge}, for $j=1,2$, 
\begin{equation}\label{cot3}
\left((L^{2\cos}_{v_{n_k},\alpha,0}-E)\hat{b}^k_j\right)(m)=-\hat{g}^k_j(m)-\sum_{|l|\geq n_k+1} \hat{v}_l \hat{b}_j^k(m+l)=:\hat{f}_j^k(m).
\end{equation}
By \eqref{tan7} and \eqref{cot3}, we have
\begin{align}\label{cot4}
\nonumber |\hat{f}_j^k(m)|&\leq |\hat{g}_j^k(m)|+C(\sum_{l=n_k+1}^\infty e^{-2\pi|l|h_1}|B^k|_0+\sum_{l=-\infty}^{-n_k-1}e^{-2\pi |l|h_1}|B^k|_0)\\
&\leq e^{-10h_1 n_k}+Ce^{r_k}e^{-2\pi(n_k+1)h_1}\leq Ce^{-2\pi n_k (h_1-\delta)}
\end{align}
for any fixed $\delta>0$ and large $k$.
\begin{Lemma}[\cite{dryAYZ,gjy}]\label{10}
For $j=1,2$, there exists $m_j\in I_k$, such that
\begin{equation}\label{z1-estimate-21age}
|\hat{b}^k_{j}(m_j)|\geq ce^{-\eta r_k}.
\end{equation}
\end{Lemma}

Compared with \cite{gjy}, the Fourier lower bound $r_k^{-1}$
is replaced by $ce^{-\eta r_k}$, while the uniform
bound on $|B^k|_0$ is replaced by $Ce^{\eta r_k}$.
These changes introduce factors $e^{O(\eta r_k)}$
in the proof of Proposition~11.1 and the subsequent
argument, which are absorbed by the exponential
margins when $\eta>0$ is sufficiently small. Then we have all the desired estimates in Theorem 11.2 in \cite{gjy}. In the final truncation argument, choose
$s_k=\min\{l(n_k), Mr_k\}$, where $M$ is a
sufficiently large fixed integer. The final contradiction follows from $r_k=o(n_k)$. Thus we get a contradiction.
\end{pf}
\subsection{Generalized Moser-P\"oschel's argument} The following proposition is essentially proved in \cite{dryAYZ}, we also give a sketch here.
\begin{Proposition}\label{9}
Suppose that \(\alpha\in\mathbb R\backslash\mathbb Q\), and the cocycle
\((\alpha,A_{E_{k_0}})\) satisfies Theorem \ref{6}, then the cocycle \((\alpha,A_{E_{k_0}+\tau})\) is uniformly hyperbolic with
\[
2\rho(\alpha,A_{(E_{k_0}+\tau)}(x))
=
k_0\alpha\bmod\mathbb Z
\]   
\begin{itemize}
\item for $0<|\tau|<\tau_0$, satisfying \(d\tau<0\) where $\tau_0>0$ is sufficiently small, if  $\beta(\alpha)=0$;
\item for $e^{-\frac{\delta_n}{5}q_{n+1}}<|\tau|\leq e^{-\frac{\delta_n}{10}q_{n+1}}$ satisfying \(d_n\tau<0\) for all sufficiently large admissible $n$ if $\beta(\alpha)>0$.
\end{itemize} 
\end{Proposition}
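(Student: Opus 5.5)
Conjugate the perturbed cocycle by the same $B$ (or $B_n$) that normalizes $(\alpha,A_{E_{k_0}})$, then read off uniform hyperbolicity from a constant parabolic-plus-perturbation normal form. Since $A_{E_{k_0}+\tau}(x)=A_{E_{k_0}}(x)+\tau\begin{pmatrix}1&0\\0&0\end{pmatrix}$, we get in the case $\beta(\alpha)=0$ (with $(-1)^\ell=1$ for simplicity)
\begin{equation*}
B(x+\alpha)^{-1}A_{E_{k_0}+\tau}(x)B(x)=\begin{pmatrix}1&d\\0&1\end{pmatrix}+\tau P(x),\qquad P(x)=B(x+\alpha)^{-1}\begin{pmatrix}1&0\\0&0\end{pmatrix}B(x).
\end{equation*}
Write $B=\begin{pmatrix}b_{11}&b_{12}\\ b_{21}&b_{22}\end{pmatrix}$. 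The lower-left entry of $P$ is, up to sign, $b_{11}(x+\alpha)b_{11}(x)$ to leading order (using $B(x+\alpha)^{-1}=B(x)^{-1}+O(\text{small})$ after averaging). Its mean $[P_{21}]$ is then nonzero and of sign opposite to nothing degenerate, since $\int b_{11}^2>0$ once the $O(\alpha)$-shift is handled.

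The first step is to remove the nonconstant part of $\tau P$ by one KAM/averaging step. We solve a cohomological equation $Y(x+\alpha)-Y(x)=\tau(P-[P])$ at the level of the parabolic linearization; the obstruction from $\mathrm{ad}$ of $\mathrm{id}+d\mathfrak L$ is of size $O(|d|)$ and small. Because $\beta(\alpha)=0$, the small divisors cost only a loss of analyticity strip. This leaves the constant matrix
\begin{equation*}
\begin{pmatrix}1+\tau a&d+\tau b\\ \tau c&1-\tau a\end{pmatrix}+O(\tau^2),\qquad c=[P_{21}]\ne 0.
\end{equation*}
Its discriminant is $\operatorname{tr}^2-4\approx 4\tau dc+O(\tau^2)$. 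The sign convention $c>0$ (coming from $\deg$ and orientation of $B$, as in \cite{dryAYZ}) shows that the trace exceeds $2$ in absolute value exactly when $d\tau<0$, for $|\tau|<\tau_0$, and that the hyperbolicity $\sqrt{|d\tau|}$ dominates the $O(\tau^2)$ error. A constant hyperbolic matrix plus a perturbation much smaller than its splitting gives a uniformly hyperbolic cocycle by cone-field stability. Since $B$ has degree $k_0$ and the conjugated cocycle is homotopic to a constant with rotation number $0$, the fibered rotation number satisfies $2\rho=k_0\alpha \bmod \Z$.

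For $\beta(\alpha)>0$ we repeat the argument with $B_n$ and $F_n$ from Theorem \ref{thm3.2}. The conjugated cocycle is $\mathrm{id}+d_n\mathfrak L+F_n+\tau P_n$, and the size bookkeeping runs as follows:
\begin{itemize}
\item $|F_n|_{h_n}\le e^{-q_{n+1}\delta_n}$;
\item $|\tau|\in(e^{-\frac{\delta_n}{5}q_{n+1}},e^{-\frac{\delta_n}{10}q_{n+1}}]$;
\item $|d_n|\ge e^{-\frac{1}{100}q_{n+1}\delta_n}$ by Theorem \ref{6}(2);
\item $|B_n|\le e^{2q_{n+1}\varepsilon_n^{1/4}}$, and $\varepsilon_n^{1/4}$ is negligible compared with $\delta_n$, since $\varepsilon_n$ decays like $e^{-q_n\delta'/2}$ while $\delta_n=h_*2^{-n}$.
\end{itemize}
Hence $\sqrt{|d_n\tau|}\ge e^{-(\frac{1}{200}+\frac{1}{10})\delta_nq_{n+1}}$. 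This strictly dominates both $|F_n|$ and $\tau^2|P_n|^2$, and any averaging errors are controlled by $|B_n|^{O(1)}$.

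Here the averaging of $\tau(P_n-[P_n])$ must be done only at frequencies $|m|<q_{n+1}$, where $\|m\alpha\|\ge 1/(2q_{n+1})$. The tail of order $e^{-2\pi h_n q_{n+1}}$ is absorbed, again below the hyperbolicity. We also need $|[P_{n,21}]|$ to be bounded below by $e^{-o(\delta_n q_{n+1})}$. This follows from Lemma \ref{10}-type Fourier lower bounds on $b_{11}^n$, i.e. $\|b_{11}^n\|_{L^2}\ge ce^{-\eta r_n}$, with $r_n=o(\delta_nq_{n+1})$.

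The main obstacle is this lower bound on the averaged off-diagonal coupling $[P_{21}]$, together with its sign. It must be nondegenerate and of definite sign so that $d\tau<0$, rather than $d\tau>0$, yields trace $>2$. For that I would use $P_{21}(x)=-b_{11}(x)b_{11}(x+\alpha)+(\text{correction})$, and compare with $\int|b_{11}|^2$ via $|b_{11}(x+\alpha)-b_{11}(x)|$. That comparison is small only after the preliminary conjugation making $B$ close to rotation-invariant, which is where the argument of \cite{dryAYZ} enters.
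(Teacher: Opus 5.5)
There is a genuine gap, and it is the step you yourself flag as the main obstacle. Your overall architecture matches the paper's (conjugate by $B_n$, average, read off the trace of a nearly parabolic constant matrix, use invariant cones, get the rotation number from $\deg B_n=k_0$). But the coupling entry is misidentified, and your proposed repair cannot work.

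From your own formula, the lower-left entry of $P(x)=B(x+\alpha)^{-1}\begin{pmatrix}1&0\\0&0\end{pmatrix}B(x)$ is exactly $-b_{21}(x+\alpha)\,b_{11}(x)$, not $\pm b_{11}(x+\alpha)b_{11}(x)$. Moreover, $B(x+\alpha)^{-1}$ is not close to $B(x)^{-1}$: the conjugacy equation gives $B(x+\alpha)^{-1}=C\,B(x)^{-1}A_E(x)^{-1}$ with $C=\mathrm{id}+d\mathfrak L$ (plus an $F_n$-error when $\beta(\alpha)>0$). Comparing $b_{11}(\cdot+\alpha)$ with $b_{11}$ produces no sign. $B_n$ is a degree-$k_0$ conjugacy of norm up to $e^{2q_{n+1}\varepsilon_n^{1/4}}$, nothing makes it nearly $\alpha$-invariant, and $\int b_{11}(x)b_{11}(x+\alpha)\,dx$ can vanish or have either sign.

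The missing idea is the Moser--P\"oschel identity coming from the Schr\"odinger form of $A_E$. Since $A_E(x)^{-1}\begin{pmatrix}1&0\\0&0\end{pmatrix}=\begin{pmatrix}0&0\\-1&0\end{pmatrix}$,
\[
\tau P(x)=\tau\,C\,B(x)^{-1}\begin{pmatrix}0&0\\-1&0\end{pmatrix}B(x)=\tau\,C\begin{pmatrix}b_{11}b_{12}&b_{12}^2\\-b_{11}^2&-b_{11}b_{12}\end{pmatrix}(x).
\]
Equivalently, $b_{21}(x+\alpha)=b_{11}(x)$, because the second row of $A_E$ is $(1,0)$. All entries are now evaluated at the same point. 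After averaging you get exactly the paper's $A_2-\widetilde M$ (with $z_{11}=b_{11}$, $z_{21}=b_{12}$), whose trace is $2-\tau d\,[b_{11}^2]$. This exceeds $2$ precisely when $d\tau<0$, because $B$ is real.

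The sign is forced, not a convention. Your discriminant $4\tau dc$ with $c>0$ would in fact give hyperbolicity for $d\tau>0$; the correct value is $c=-[b_{11}^2]<0$. With the correct entry, the quantitative lower bound is also immediate. $\det B=1$ gives $b_{11}^2+b_{21}^2\ge |B|_0^{-2}$ pointwise, and the shift identity gives $[b_{21}^2]\approx[b_{11}^2]$. Hence $[b_{11}^2]\gtrsim |B_n|_0^{-2}$, which is the factor in the paper's trace estimate; your $L^2$ bound via Lemma~\ref{10} would also suffice.

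A secondary point: domination by $\sqrt{|d\tau|}$ is the wrong test for persistence of hyperbolicity. The constant part is nearly parabolic, with eigendirections at an angle of order $\sqrt{|\tau/d|}$, so errors must be small compared with $|\tau|[b_{11}^2]$. For example, flipping the entry $-\tau[b_{11}^2]$ to $+\tau[b_{11}^2]$ is a perturbation far below $\sqrt{|d_n\tau|}$ (since $|\tau|\ll|d_n|$ in the window), yet it makes the matrix elliptic.

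The errors $e^{-\delta_nq_{n+1}}$ and $\tau^2e^{8q_{n+1}\varepsilon_n^{1/4}}$ do pass the correct test. The cross term $d_n\bigl(\mathfrak L Y(x)-Y(x+\alpha)\mathfrak L\bigr)$ left by your untwisted equation $Y(x+\alpha)-Y(x)=\tau(P-[P])$ is a different matter. It is of order $|d_n||\tau||B_n|^2q_{n+1}$, which the bounds of Theorem~\ref{thm3.2} do not make small relative to $|\tau||B_n|_0^{-2}$. Instead, solve the twisted equation $Y(x+\alpha)C-CY(x)=\tau C\bigl(Q-[Q]\bigr)$, with $Q=B^{-1}\begin{pmatrix}0&0\\-1&0\end{pmatrix}B$, on the modes $0<|m|<q_{n+1}$. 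There the operator $X\mapsto e^{2\pi i m\alpha}XC-CX$ differs from multiplication by $e^{2\pi i m\alpha}-1$ only by $O(|d_n|)$, so it is invertible once $|d_n|q_{n+1}\ll 1$. This leaves a remainder of order $\tau^2$ times a subexponential factor, as in the paper's $\widetilde M$.
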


\begin{pf}
$\beta(\alpha)=0$ follows from the classical Moser-P\"oschel argument. For $\beta(\alpha)>0$,
we will consider the case \((-1)^{\ell}=1\), the other case being analogous. Applying Theorem \ref{thm3.2} to $(\alpha,A_{E_{k_0}})$, we get a sequence of $B_n$ and $F_n$, We write
\begin{equation}
B_n(x)=
\begin{pmatrix}
z_{11}(x) & z_{21}(x)\\
z_{12}(x) & z_{22}(x)
\end{pmatrix},
\qquad
F_n(x)=
\begin{pmatrix}
\beta_{1}(x) & \beta_{2}(x)\\
\beta_{3}(x) & \beta_{4}(x)
\end{pmatrix}.
\end{equation}
Then, following the proof of  Proposition 5.1 in \cite{dryAYZ}, one can conjugate the cocycle \((\alpha,A_{(E_{k_0}+\tau)}(x))\) to \((\alpha,A_2)\) by a degree $k_0$ map with
\[
A_2=
\begin{pmatrix}
1+\tau\bigl([z_{11}z_{21}]-d_n[z_{11}^{2}]\bigr)
&
d_n+\tau\bigl(-d_n[z_{11}z_{21}]+[z_{21}^{2}]\bigr)\\[2mm]
-\tau[z_{11}^{2}]
&
1-\tau[z_{11}z_{21}]
\end{pmatrix}
+\widetilde M,
\]
where
\[
\begin{aligned}
\|\widetilde M\|_{0}\leq |\tau|^{2}e^{8q_{n+1}\epsilon_n^{1/4}}+e^{-\delta_nq _{n+1}}\leq 
3|\tau|^{2}e^{8q_{n+1}\epsilon_n^{1/4}}, \ \  e^{-\frac{\delta_n}{5}q_{n+1}}<|\tau|\leq e^{-\frac{\delta_n}{10}q_{n+1}}.
\end{aligned}
\]

The \(x\)-independent part of the cocycle \((\alpha,A_2)\), i.e.
\((\alpha,A_2-\widetilde M)\), is hyperbolic if \(d_n\tau<0\). In fact,
\[
\tr (A_2-\widetilde M)-2\geq c^2|d_n||\tau| |B_n|_0^{-2} \geq c^2|\tau|e^{-\frac{\delta_n}{100}q_{n+1}}
e^{-4q_{n+1}\epsilon_n^{1/4}},
\]
\[
 |\det{(A_2-\widetilde M)}-1|\leq \tau^2e^{8q_{n+1}\e_n^{\frac{1}{4}}},\ \ \|\widetilde M\|_{0}\leq 3|\tau|^{2}e^{8q_{n+1}\epsilon_n^{1/4}}
\]
Thus it is easy to see that the cocycle \((\alpha,A_2(x))\) is uniformly hyperbolic when \(d_n\tau<0\) and $e^{-\frac{\delta_n}{5}q_{n+1}}<|\tau|\leq e^{-\frac{\delta_n}{10}q_{n+1}}$, by standard invariant cone argument. Hence \((\alpha,A_{(E_{k_0}+\tau)})\) is also uniformly hyperbolic  and
\[
2\rho(\alpha,A_{(E_{k_0}+\tau)}(x))
=
k_0\alpha\bmod\mathbb Z.
\]
\end{pf}
\noindent Proof of Theorem \ref{8}:  
Choose $\tau$ as in Proposition \ref{9} in either case. $E_{k_0}+\tau\notin\Sigma$, and, for
sufficiently small $|\tau|$, equality of the rotation numbers
modulo $\mathbb Z$ gives equality of their continuous local
lifts. Hence
\[
N(E_{k_0}+\tau)=N(E_{k_0}).
\]
By monotonicity, $N$ is constant between these two energies.
Since $\operatorname{supp}(dN)=\Sigma$, the open interval
between them is disjoint from $\Sigma$. Finally,
$L(E_{k_0})=0$ implies $E_{k_0}\in\Sigma$, so $E_{k_0}$ is an endpoint
of an open spectral gap carrying the prescribed label.\qed

\section{AI-use disclosure}
No generative AI tools were used in this work nor preparation.

\end{document}